\theoremstyle{plain}
\newtheorem{theorem}{Theorem}
\newtheorem{lemma}[theorem]{Lemma}
\theoremstyle{definition}
\begin{document}
\title{Finding the Fermat Point via analysis}
\author{Luqing Ye}
\address{An undergrad at College of Science, Hangzhou Normal University,Hangzhou City,Zhejiang Province,China}
\email{yeluqingmathematics@gmail.com}

\maketitle

 Let $P_1,P_2,P_3$ be three given points in $\mathbf{R}^2$ ,and $P$ be an arbitrary
 point in $\mathbf{R}^2$.The classical Fermat's problem to
 Torricelli asks for the location of $P$,such that 
 \begin{equation*}
   |PP_1|+|PP_2|+|PP_3|
 \end{equation*}
is a minimum.Then $P$ is called the Fermat point of the triangle
$P_1P_2P_3$(Triangle $P_1P_2P_{3}$ is  nondegenerate).There exist several elegant geometrical solutions in the
literature.In this note,we consider finding the Fermat point by
using methods in advanced calculus.The main tools we use are the 
extreme value theorem,Fermat's theorem,and the intermediate value
theorem,which are listed below.
\begin{theorem}[The extreme value theorem]
Let $f:D\to \mathbf{R}$ be a continuous function,where $D$ is a
nonempty bounded closed set in $\mathbf{R}^2$.Then $f$ must attain a minimum
on $D$.That is,there exists a point $\xi$ in $D$ such that $f(\xi)\leq
f(x)$ for all $x\in D$.
\end{theorem}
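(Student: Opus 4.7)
The plan is to establish both that $f$ is bounded below on $D$ and that the infimum is attained, packaging both claims into a single sequential-compactness argument. The two nontrivial ingredients I will invoke are the Bolzano--Weierstrass theorem in $\mathbf{R}^2$ (every bounded sequence has a convergent subsequence) and the sequential characterization of closed sets (a closed set contains the limits of all its convergent sequences); both are standard and I will not reprove them.

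First I would set $m=\inf_{x\in D}f(x)\in[-\infty,+\infty)$, adopting the convention that $m=-\infty$ if $f$ is not bounded below. By the definition of infimum (extended to $-\infty$), I can choose a minimizing sequence $\{x_n\}\subset D$ with $f(x_n)\to m$; concretely, pick $x_n$ so that $f(x_n)<-n$ when $m=-\infty$ and $f(x_n)<m+1/n$ otherwise. Since $D$ is bounded, $\{x_n\}$ is a bounded sequence in $\mathbf{R}^2$, so Bolzano--Weierstrass yields a subsequence $x_{n_k}\to\xi$ for some $\xi\in\mathbf{R}^2$, and closedness of $D$ gives $\xi\in D$.

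Now continuity of $f$ at $\xi$ forces $f(x_{n_k})\to f(\xi)$; but $\{f(x_{n_k})\}$ is a subsequence of $\{f(x_n)\}$, which tends to $m$, so $f(\xi)=m$. In particular $m\neq-\infty$, which retroactively shows that $f$ is bounded below, and the minimum is attained at $\xi$, as required.

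The main conceptual obstacle is avoiding circularity when $f$ might \emph{a priori} fail to be bounded below: one cannot simply say "let $f(x_n)\to m\in\mathbf{R}$" without presuming the conclusion. I sidestep this by allowing $m=-\infty$ throughout and letting the final step, $f(\xi)=\lim f(x_{n_k})=m$ with $f(\xi)\in\mathbf{R}$, exclude that possibility \emph{a posteriori}. Everything else is routine application of Bolzano--Weierstrass, closedness of $D$, and continuity of $f$.
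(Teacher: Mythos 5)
Your proof is correct, and it is the standard argument: a minimizing sequence, Bolzano--Weierstrass on the bounded set $D$, closedness to keep the limit $\xi$ in $D$, and sequential continuity to conclude $f(\xi)=m$. The handling of possible unboundedness below by allowing $m=-\infty$ and excluding it \emph{a posteriori} from $f(\xi)=\lim f(x_{n_k})\in\mathbf{R}$ is exactly the right way to avoid circularity. Note that the paper itself states this theorem without proof, as one of the standard tools from advanced calculus it takes as given, so there is no authorial proof to diverge from; your argument is the canonical one and fills that gap correctly.
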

\begin{theorem}[Fermat's theorem]
  Let $f:C\to \mathbf{R}$ be a differentiable function,where $C$ is a
  nonempty open set in $\mathbf{R}^2$.Suppose $x_0\in C$ is a local
  extreme point of $f$,then $f'(x_0)$ is a zero linear map\footnote{A zero
    linear map maps any vectors to the zero vector.} from
  $\mathbf{R}^2$ to $\mathbf{R}$.
\end{theorem}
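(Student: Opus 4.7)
The plan is to reduce the two-variable statement to the one-variable Fermat theorem by restricting $f$ to arbitrary lines through $x_0$. Concretely, I would fix an arbitrary nonzero vector $v\in\mathbf{R}^2$ and, using that $C$ is open, choose $\delta>0$ small enough that $x_0+tv\in C$ for all $t\in(-\delta,\delta)$. I would then define the auxiliary function $g:(-\delta,\delta)\to\mathbf{R}$ by $g(t)=f(x_0+tv)$.

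Next I would check two properties of $g$ at $t=0$. First, since $f$ is differentiable at $x_0$ and the map $t\mapsto x_0+tv$ is affine (hence differentiable), the chain rule gives that $g$ is differentiable at $0$ with $g'(0)=f'(x_0)(v)$. Second, because $x_0$ is a local extreme point of $f$, the same inequality that characterizes the extremum on a neighborhood of $x_0$ in $C$ restricts to an inequality on a neighborhood of $0$ in $(-\delta,\delta)$, so $0$ is a local extreme point of $g$.

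I would then invoke the one-variable version of Fermat's theorem (which I would cite from any standard calculus text as a known fact) to conclude that $g'(0)=0$, and hence $f'(x_0)(v)=0$. Since $v$ was an arbitrary nonzero vector of $\mathbf{R}^2$, and the claim is trivial for $v=0$ by linearity, the linear map $f'(x_0)$ annihilates every vector of $\mathbf{R}^2$, so it is the zero linear map.

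The routine checks (openness of $C$ guaranteeing $\delta>0$, the chain-rule identification $g'(0)=f'(x_0)(v)$, and the transfer of the local extremum from $x_0$ to $0$) are direct. The only real conceptual step is the reduction to one variable along a line, and the only external input needed is the one-dimensional Fermat theorem; once the directional restriction is set up, no multivariable machinery beyond the chain rule is required.
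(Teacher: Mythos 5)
Your proof is correct, but there is nothing in the paper to compare it against: the paper states this version of Fermat's theorem without any proof, listing it (alongside the extreme value theorem and the intermediate value theorem) as a standard tool from advanced calculus, and then simply invokes it later to obtain the gradient equations \eqref{eq:1} at a candidate minimum point. What you supply is the canonical textbook argument: restrict $f$ to the line $t\mapsto x_0+tv$, use openness of $C$ to get the domain $(-\delta,\delta)$, identify $g'(0)=f'(x_0)(v)$ by the chain rule (which needs only differentiability of $f$ at $x_0$, guaranteed by the hypothesis), observe that the local-extremum inequality for $f$ near $x_0$ restricts to one for $g$ near $0$, and apply the one-variable Fermat theorem to conclude $f'(x_0)(v)=0$ for every $v\neq 0$, with $v=0$ handled by linearity. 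All of these steps go through exactly as you describe, so the proof is complete; it is presumably the very argument the author had in mind in omitting one, and your write-up would serve as a suitable proof if the paper chose to include it.
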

\begin{theorem}[The intermediate value theorem]
  Let $f:I\to \mathbf{R}$ be a continuous function,where $I=[a,b]$ is a
  closed interval of $\mathbf{R}$.For any real number $u$ between
  $f(a)$ and $f(b)$,there exists a $\xi\in [a,b]$ such that $f(\xi)=u$.
\end{theorem}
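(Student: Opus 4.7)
The plan is to reduce to the case $f(a) \leq u \leq f(b)$ (the opposite case is handled symmetrically, or by applying the argument to $-f$) and then to produce the desired point $\xi$ as the supremum of the set where $f$ does not exceed $u$. Concretely, I would set
\begin{equation*}
S = \{x \in [a,b] : f(x) \leq u\},
\end{equation*}
observe that $a \in S$ so that $S$ is nonempty, and that $S \subseteq [a,b]$ so that $S$ is bounded above. By the completeness of $\mathbf{R}$, the supremum $\xi = \sup S$ exists and lies in $[a,b]$. The target is then to show $f(\xi) = u$.

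To establish $f(\xi) = u$, I would rule out the two strict alternatives $f(\xi) < u$ and $f(\xi) > u$, each by extracting a contradiction from the continuity of $f$ at $\xi$. If $f(\xi) < u$, continuity produces a neighborhood of $\xi$ on which $f$ remains below $u$; this neighborhood must contain points of $[a,b]$ strictly greater than $\xi$ (using $\xi < b$, which holds because $f(b) \geq u > f(\xi)$), and those points lie in $S$, contradicting $\xi$ being an upper bound. If instead $f(\xi) > u$, continuity produces a neighborhood of $\xi$ on which $f$ exceeds $u$, so no point of this neighborhood lies in $S$; combined with $\xi > a$ (since $f(a) \leq u < f(\xi)$), this contradicts $\xi$ being the \emph{least} upper bound of $S$, because a smaller upper bound could be chosen inside that neighborhood.

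The main subtle point will be the careful handling of the two boundary situations above, in particular verifying the strict inequalities $\xi < b$ and $\xi > a$ so that the continuity neighborhoods actually interact with $[a,b]$ and with $S$ in the required way. Once those inequalities are in hand, the remainder is a routine $\epsilon$-$\delta$ application of continuity with $\epsilon = u - f(\xi)$ and $\epsilon = f(\xi) - u$ respectively, together with the definition of supremum.
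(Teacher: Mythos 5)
Your proof is correct: the set $S=\{x\in[a,b]:f(x)\le u\}$ is nonempty (it contains $a$) and bounded above by $b$, its supremum $\xi$ exists by completeness, and your two-case continuity argument --- with the boundary verifications $\xi<b$ when $f(\xi)<u$ and $\xi>a$ when $f(\xi)>u$, plus the reduction of the case $f(a)\ge u\ge f(b)$ via $-f$ --- correctly rules out both strict alternatives, so $f(\xi)=u$. There is, however, no proof in the paper to compare against: the intermediate value theorem is stated there purely as a background tool from advanced calculus (alongside the extreme value theorem and Fermat's theorem) and is invoked without proof, so your completeness-based supremum argument is simply the classical proof filling in what the paper takes as given.
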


And we need four lemmas.
\begin{lemma}\label{lemma:4}
  Let $M_1M_2M_3$ be a nondegenerate triangle,let $M$ be an arbitrary
  point in the interior of the triangle region.As shown in figure
  \eqref{fig:0}. Then $$\hbox{rad}\angle M_1MM_3>\hbox{rad}\angle
  M_1M_2M_3$$,where $\hbox{rad}\angle M_1MM_3$ is the radian measure of
  the angle $\angle M_1MM_3$.
\begin{proof}
  Extend the segment $M_3M$ to $N$,where $N$ is a point on the segment $M_1M_2$.Then  $\hbox{rad}\angle M_1MM_3>\hbox{rad}\angle M_1NM_3$,and $\hbox{rad}\angle M_1NM_3>\hbox{rad}\angle M_1M_2M_3$.So $\hbox{rad}\angle M_1MM_3>\hbox{rad}\angle M_1M_2M_3$.
\end{proof}
\begin{figure}[h]
\psset{xunit=1.0cm,yunit=1.0cm,algebraic=true,dotstyle=o,dotsize=3pt 0,linewidth=0.8pt,arrowsize=3pt 2,arrowinset=0.25}
\begin{pspicture*}(1.3,-5.88)(23.02,6.3)
\psline(6.3,3.38)(3.42,-1.66)
\psline(3.42,-1.66)(12.08,-1.62)
\psline(12.08,-1.62)(6.3,3.38)
\psline(6.3,3.38)(7.08,0.46)
\psline(7.08,0.46)(3.42,-1.66)
\psline[linestyle=dashed,dash=5pt 5pt](8.63,1.36)(7.08,0.46)
\begin{scriptsize}
\psdots[dotstyle=*](6.3,3.38)
\rput[bl](6.06,3.7){{$M_1$}}
\psdots[dotstyle=*](3.42,-1.66)
\rput[bl](2.9,-1.62){{$M_3$}}
\psdots[dotstyle=*](12.08,-1.62)
\rput[bl](12.5,-1.68){{$M_2$}}
\psdots[dotstyle=*](7.08,0.46)
\rput[bl](7.16,0.58){{$M$}}
\psdots[dotstyle=*](8.63,1.36)
\rput[bl](8.6,1.6){{$N$}}
\end{scriptsize}
\end{pspicture*}
  \caption{}
  \label{fig:0}
\end{figure}
\end{lemma}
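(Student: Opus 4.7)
The plan is to reduce the desired inequality to two applications of the exterior angle inequality (which follows from the fact that angles of a triangle sum to $\pi$ and each angle is positive). Since $M$ lies in the interior of triangle $M_1M_2M_3$, the ray emanating from $M_3$ through $M$ exits the triangle through the opposite side $M_1M_2$; call that exit point $N$. Thus $M$ lies strictly between $M_3$ and $N$, and $N$ lies strictly between $M_1$ and $M_2$.

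Next I would consider the triangle $M_1MN$. Since $M_3$, $M$, $N$ are collinear with $M$ between $M_3$ and $N$, the angle $\angle M_1MM_3$ is the exterior angle of $\triangle M_1MN$ at vertex $M$. By the exterior angle inequality, this exterior angle strictly exceeds the non-adjacent interior angle at $N$, giving $\angle M_1MM_3 > \angle M_1NM_3$. Then I would apply the same idea to triangle $M_1NM_2$: since $N$ lies between $M_1$ and $M_2$, the angle $\angle M_1NM_3$ is the exterior angle at $N$, and it strictly exceeds the non-adjacent interior angle $\angle M_1M_2M_3$ (which equals $\angle NM_2M_3$). Chaining, $\angle M_1MM_3 > \angle M_1NM_3 > \angle M_1M_2M_3$.

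The only real obstacle is justifying the configuration rigorously: that the ray from $M_3$ through an interior point $M$ really does meet the opposite open segment $M_1M_2$, and that the resulting three points $M_3, M, N$ occur in that order on the ray. This follows from convexity of the triangular region together with the observation that $M$ is interior (so the ray from $M_3$ through $M$ does not immediately leave the triangle). Once the order of points on the line and the position of $N$ on the open segment $M_1M_2$ are established, the two exterior angle applications are routine.
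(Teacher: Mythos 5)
Your proposal is correct and is essentially identical to the paper's own argument: both extend $M_3M$ to meet $M_1M_2$ at $N$ and chain the two exterior angle inequalities $\angle M_1MM_3>\angle M_1NM_3>\angle M_1M_2M_3$. You merely spell out the exterior angle justification and the configuration details (ray exiting through the open segment, betweenness) that the paper leaves implicit.
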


\begin{lemma}
  For unit vectors $\mathbf{e_1,e_2,e_3}$ on the Euclidean plane $\mathbf{R}^2$,
$$
\mathbf{e_1+e_2+e_3=0}
$$
if and only if the radian  measure of the angle  between any two of the unit
vectors is $\frac{2\pi}{3}$.
\end{lemma}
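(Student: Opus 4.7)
The statement is an iff, so I will handle the two directions separately. For both directions, the decisive tool is the dot product identity $\mathbf{u}\cdot\mathbf{v}=|\mathbf{u}||\mathbf{v}|\cos\theta$, which for unit vectors reduces to $\mathbf{u}\cdot\mathbf{v}=\cos\theta$.

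For the ``if'' direction, assume the angle between any two of the $\mathbf{e_i}$ equals $2\pi/3$. Without loss of generality I can rotate coordinates so that $\mathbf{e_1}=(1,0)$. Since $\mathbf{e_2}$ and $\mathbf{e_3}$ each make an angle of $2\pi/3$ with $\mathbf{e_1}$, both must lie on the two rays from the origin at angles $\pm 2\pi/3$ from the positive $x$-axis; and since $\mathbf{e_2},\mathbf{e_3}$ themselves make an angle $2\pi/3$ they cannot coincide, so one is $(-1/2,\sqrt{3}/2)$ and the other $(-1/2,-\sqrt{3}/2)$. Adding the three coordinate vectors gives $\mathbf{0}$.

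For the ``only if'' direction, assume $\mathbf{e_1}+\mathbf{e_2}+\mathbf{e_3}=\mathbf{0}$. I rewrite this as $\mathbf{e_3}=-(\mathbf{e_1}+\mathbf{e_2})$ and take squared norms of both sides. Using $|\mathbf{e_i}|^2=1$ for $i=1,2,3$, I obtain $1=2+2\,\mathbf{e_1}\cdot\mathbf{e_2}$, hence $\mathbf{e_1}\cdot\mathbf{e_2}=-1/2$, and therefore the angle between $\mathbf{e_1}$ and $\mathbf{e_2}$ has cosine $-1/2$, i.e.\ equals $2\pi/3$. The hypothesis is completely symmetric in the three vectors, so the identical argument applied to $\mathbf{e_1}=-(\mathbf{e_2}+\mathbf{e_3})$ and to $\mathbf{e_2}=-(\mathbf{e_1}+\mathbf{e_3})$ yields the same conclusion for the remaining two pairs.

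There is no real obstacle here; the only point worth flagging is that in the ``if'' direction one must make sure not to overdetermine the configuration. Specifying that $\mathbf{e_2}$ makes angle $2\pi/3$ with $\mathbf{e_1}$ leaves two possible unit vectors (reflections of each other across the line through $\mathbf{e_1}$), and one has to use the angle condition between $\mathbf{e_2}$ and $\mathbf{e_3}$ to rule out the case $\mathbf{e_2}=\mathbf{e_3}$. Once that is observed, the verification is a one-line coordinate computation, and the whole lemma rests on nothing more than the cosine formula for the dot product.
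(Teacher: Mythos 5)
Your proposal is correct, and it is worth noting that the paper itself gives no argument at all here --- it declares the lemma ``easy via plane geometry'' and leaves it to the reader --- so you are supplying a proof the paper omits, and by a different route than the one the paper gestures at. The implicit geometric argument would place $\mathbf{e_1},\mathbf{e_2},\mathbf{e_3}$ head to tail: the sum is $\mathbf{0}$ exactly when they close up into a triangle, which has all sides of length $1$ and is therefore equilateral, so consecutive vectors turn through an exterior angle of $\frac{2\pi}{3}$; that is slick but requires some care to run in both directions and to translate ``angle between vectors based at a common point'' into ``exterior angle of the triangle.'' Your dot-product computation buys rigor cheaply: the ``only if'' direction is a one-line norm calculation, $1=|\mathbf{e_3}|^2=|\mathbf{e_1}+\mathbf{e_2}|^2=2+2\,\mathbf{e_1}\cdot\mathbf{e_2}$, combined with the symmetry of the hypothesis, and in the ``if'' direction you correctly identify and dispose of the one genuine subtlety, namely that the angle condition with $\mathbf{e_1}$ alone leaves two reflected candidates for each of $\mathbf{e_2},\mathbf{e_3}$, and the $\mathbf{e_2}$--$\mathbf{e_3}$ angle condition is what forces them onto opposite rays. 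The only point you could make more explicit is that the angle between two vectors is by convention taken in $[0,\pi]$, so $\cos\theta=-\frac{1}{2}$ pins down $\theta=\frac{2\pi}{3}$ uniquely; with that remark your argument is complete and, unlike the paper's, actually on the page.
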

\begin{proof}
  The proof is easy via plane geometry,so is left to the reader.
\end{proof}
\begin{lemma}\label{theorem:1.22}
When the radian measure of all the interior angles of the triangle
$P_1P_2P_3$ are less than $\frac{2\pi}{3}$,then we can
find a unique point $F'$ in the interior of the triangle region
satisfying
$$
\mbox{rad}\angle P_1F'P_2=\mbox{rad}\angle P_2F'P_3=\mbox{rad}\angle P_3F'P_1=\frac{2\pi}{3}.
$$
\end{lemma}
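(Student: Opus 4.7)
The plan is to realize $F'$ as the unique intersection, other than $P_2$, of two circular loci: the set of $F$ with $\angle P_1FP_2=2\pi/3$ and the set of $F$ with $\angle P_2FP_3=2\pi/3$. Existence will follow from the intermediate value theorem applied to a continuous angle-valued function along a portion of the first locus; uniqueness will follow from the elementary fact that two distinct circles meet in at most two points.

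By the converse of the inscribed-angle theorem, the set $A_{12}$ of points $F$ on the $P_3$-side of line $P_1P_2$ satisfying $\angle P_1FP_2=2\pi/3$ is a circular arc on a circle $C_{12}$ through $P_1$ and $P_2$, and analogously $A_{23}$ is an arc on a circle $C_{23}$ through $P_2$ and $P_3$. Let $\tilde A_{12}$ denote the portion of $A_{12}$ inside the closed triangle; its two endpoints are either triangle vertices or the unique non-vertex intersections of $C_{12}$ with the sides $P_1P_3$ or $P_2P_3$ (a line meets a circle in at most two points, one already being a vertex). I will parametrize $\tilde A_{12}$ continuously by $\gamma:[0,1]\to\mathbf{R}^2$ and consider $\beta(t):=\angle P_2\gamma(t)P_3$. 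A short case analysis at each endpoint yields $\beta(0)<2\pi/3<\beta(1)$: if $\gamma(0)=P_1$ then $\beta(t)\to\angle P_2P_1P_3<2\pi/3$ by hypothesis; if instead $\gamma(0)$ lies in the interior of segment $P_1P_3$, then the rays from $\gamma(0)$ to $P_1$ and to $P_3$ are antiparallel, so $\angle P_1\gamma(0)P_3=\pi$, whence $\beta(0)=\pi-2\pi/3=\pi/3$; the symmetric computation at $\gamma(1)$, combined with the identity $\beta(t)=4\pi/3-\angle P_1\gamma(t)P_3$ valid for interior $\gamma(t)$ (since the three angles at an interior point sum to $2\pi$), gives a limit exceeding $2\pi/3$. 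The intermediate value theorem then supplies $t^*\in(0,1)$ with $\beta(t^*)=2\pi/3$. Setting $F':=\gamma(t^*)$, the strict inequalities at the endpoints force $F'$ to lie in the open interior, and the angle-sum $2\pi$ then automatically gives $\angle P_3F'P_1=2\pi/3$ as well.

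For uniqueness, any candidate $F'$ lies on both $C_{12}$ and $C_{23}$. These circles are distinct, for if they coincided then $P_3\in C_{12}$ would lie on the $P_3$-side of $P_1P_2$, forcing $\angle P_1P_3P_2=2\pi/3$ and contradicting the hypothesis. Two distinct circles meet in at most two points; since they share $P_2$, the other intersection is $F'$, which is therefore unique. I expect the principal obstacle to be the endpoint case analysis for $\tilde A_{12}$: the arc $A_{12}$ need not remain inside the triangle (it exits through a side $P_iP_3$ precisely when the triangle angle at $P_i$ is less than $\pi/3$), so verifying that $\tilde A_{12}$ is a single connected arc and that the sign of $\beta-2\pi/3$ flips between its two endpoints in every configuration requires checking the four pairings of possible endpoint types, though each individual case is elementary.
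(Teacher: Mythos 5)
Your proposal is correct, and while its existence half shares the paper's basic mechanism, its uniqueness half takes a genuinely different route. For existence, the paper also slides a point along a circular arc on which one of the three angles is pinned at $\frac{2\pi}{3}$ and applies the intermediate value theorem once along that arc; the difference is how the pinned circle is obtained. You invoke the converse of the inscribed-angle theorem to get the locus $A_{12}$ directly, whereas the paper spends a \emph{first} application of the intermediate value theorem on the pencil of circles through $P_2,P_3$ (the subtended angle moving from $\mbox{rad}\angle P_3P_1P_2<\frac{2\pi}{3}$ at the circumcircle toward $\pi$ as the center recedes) to manufacture the same circle; your shortcut buys brevity at the cost of quoting a classical theorem the paper works around. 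Your endpoint analysis is in fact more careful than the paper's: the paper lets $F$ tend to the lines $P_1P_3$ and $P_1P_2$ and reads off the limits $\pi$ and $\frac{\pi}{3}$, tacitly assuming the relevant arc stays inside the triangle, whereas you note the arc exits through a side exactly when the adjacent base angle is below $\frac{\pi}{3}$, and your four endpoint values ($\angle P_2P_1P_3$ or $\frac{\pi}{3}$ at one end, $\pi$ or $\frac{4\pi}{3}-\angle P_1P_2P_3$ at the other) straddle $\frac{2\pi}{3}$ in every pairing, so the case analysis you defer does close; connectivity of $\tilde A_{12}$ also follows from your own observation that each side meets $C_{12}$ in at most one non-vertex point. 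For uniqueness the two arguments genuinely diverge: the paper appeals to Lemma \eqref{lemma:4} (an interior point sees a side at a strictly larger angle) and dismisses the matter as obvious, while you observe that any candidate is the second intersection point of the distinct circles $C_{12}$ and $C_{23}$ through $P_2$ --- a more standard and more airtight argument, since it needs no comparison between two hypothetical solutions and distinctness of the circles is forced by $\angle P_1P_3P_2<\frac{2\pi}{3}$. One minor polish: at the endpoint $\gamma(1)=P_2$ the function $\beta$ is undefined, so you should apply the intermediate value theorem on $[0,1-\epsilon]$ after extracting your limit $\frac{4\pi}{3}-\angle P_1P_2P_3>\frac{2\pi}{3}$.
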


\begin{proof}
As shown in figure
\eqref{fig:1},suppose that an interior  point  of
the triangle region $F$ is on a circle which passes through
$P_2,P_3$,then $\hbox{rad}\angle P_3FP_2$ is a
constant,i.e,$\hbox{rad}\angle P_3FP_2$ remains unchanged when $F$
moves on the circle.Now let this circle move while keeping the property that the circle passes through
$P_2,P_3$.When the center of this circle moves downward to
infinity,$\hbox{rad}\angle P_3FP_2$ tends to $\pi$.When
the center of this circle moves from infinity to a location such that the circle passes through
$P_1,P_2$ and $P_3$,then $\hbox{rad}\angle P_3FP_2$ becomes
$\hbox{rad}\angle P_3P_1P_2$,which is less than $\frac{2\pi}{3}$.So according to
the intermediate value theorem,there exists a location $G$ such that when the center of this circle moves to
$G$,then $\hbox{rad}\angle P_3FP_2$ becomes
$\frac{2\pi}{3}$.Denote the circle centering at $G$ by $O'$.Now let $F$ move on $O'$.When $F$ tends to the line $P_1P_{3}$,$\hbox{rad}\angle P_1FP_3$ tends to
$\pi$ while $\hbox{rad}\angle P_1FP_2$ tends to $2\pi-\pi-\frac{2\pi}{3}=\frac{\pi}{3}$.
When $F$ tends to the line $P_1P_2$,
$\hbox{rad}\angle P_1FP_2$ tends to $\pi$ while $\hbox{rad}\angle P_1FP_3$ tends
to $2\pi-\pi-\frac{2\pi}{3}=\frac{\pi}{3}$.So according to the
intermediate value theorem,there exists a point $F'$ on the circle $O'$ such
that $\hbox{rad}\angle P_1F'P_3=\hbox{rad}\angle P_1F'P_{2}$,i.e,both of them are equal to
$\frac{2\pi-\frac{2\pi}{3}}{2}=\frac{2\pi}{3}$.So
$$
\mbox{rad}\angle P_1F'P_2=\mbox{rad}\angle P_2F'P_3=\mbox{rad}\angle P_3F'P_1=\frac{2\pi}{3}.
$$
And the uniqueness of the point $F'$ is obvious by lemma 4.
\end{proof}
\begin{figure}[h]
\psset{xunit=1.0cm,yunit=1.0cm,algebraic=true,dotstyle=o,dotsize=3pt 0,linewidth=0.8pt,arrowsize=3pt 2,arrowinset=0.25}
\begin{pspicture*}(2,-6.67)(21.62,4.3)
\psline(7.66,3.72)(5.22,-0.84)
\psline(5.22,-0.84)(12.22,-0.84)
\psline(12.22,-0.84)(7.66,3.72)
\psline(8.72,-6.67)(8.72,4.3)
\pscircle(8.72,0.21){3.65}
\pscircle(8.72,-0.45){3.52}
\pscircle(8.72,-2.56){3.9}
\psline(7.66,3.72)(7.93,1.26)
\psline(7.93,1.26)(5.22,-0.84)
\psline(7.93,1.26)(12.22,-0.84)
\begin{scriptsize}
\psdots[dotstyle=*](7.66,3.72)
\rput[bl](7.73,3.83){{$P_1$}}
\psdots[dotstyle=*](5.22,-0.84)
\rput[bl](4.44,-0.94){{$P_3$}}
\psdots[dotstyle=*](12.22,-0.84)
\rput[bl](12.29,-0.73){{$P_2$}}
\psdots[dotstyle=*](8.72,0.21)
\psdots[dotstyle=*](8.72,-0.45)
\rput[bl](8.79,-0.35){}
\psdots[dotstyle=*](8.72,-2.56)
\rput[bl](8.79,-2.45){$G$}
\psdots[dotstyle=*](7.93,1.26)
\rput[bl](8,1.36){{$F$}}
\end{scriptsize}
\end{pspicture*}
\caption{}\label{fig:1}
\end{figure}

\begin{lemma}
  Let $f:\mathbf{R}^2\to \mathbf{R}$ be continuous,and $f$ be differentiable
  in the deleted neighborhood\footnote{The deleted neighborhood of a
    point is the neighborhood of the point excluding the point
    itself.} of $P\in \mathbf{R}^2$.A line $l$ in
  $\mathbf{R}^2$ passes through $P$.For any sequence of points $Q_1,Q_2,\cdots,Q_n,\cdots$ on
  $l$ such that $\lim_{n\to\infty}|Q_nP|=0$($P$ is not in the sequence),if
  $\lim_{n\to\infty} f'(Q_n)$ exists and is not a zero linear map from
  $\mathbf{R}^2$ to $\mathbf{R}$,then $P$ is not a local extreme point
  of $f$.
\end{lemma}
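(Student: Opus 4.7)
The plan is a proof by contradiction: assume $P$ is a local extremum, and, replacing $f$ with $-f$ if necessary, assume $P$ is a local minimum, so $f(Q) \geq f(P)$ on some open neighborhood $U$ of $P$.

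First I would use $L := \lim_n f'(Q_n) \neq 0$ to pick $v \in \mathbf{R}^2$ with $L(v) > 0$, so that for all $n$ large $f'(Q_n)(v) > L(v)/2$. The definition of the derivative of $f$ at $Q_n$ then gives small positive $s_n$ with
\[
f(Q_n - s_n v) < f(Q_n) - s_n L(v)/4.
\]
For $n$ large the point $Q_n - s_n v$ lies in $U$, so the local-minimum assumption yields $f(Q_n - s_n v) \geq f(P)$; combining gives $f(Q_n) > f(P) + s_n L(v)/4$. Because $f$ is continuous and $f(Q_n) \to f(P)$, the contradiction will appear once $s_n$ can be chosen bounded below by a positive constant as $n$ varies.

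The hard part is securing that uniform lower bound on $s_n$, since the bare pointwise linearization at $Q_n$ only guarantees a validity radius $\delta_n$ that could shrink arbitrarily fast as $n\to\infty$. To handle this I would trade the infinitesimal expansion for a macroscopic mean value estimate: apply the classical mean value theorem to $f$ on the segment $[Q_n, Q_n - sv]$, which for $s$ in a fixed bounded range stays inside the deleted neighborhood of $P$ for all $n$ large and hence inside the region of differentiability of $f$. This yields
\[
f(Q_n - sv) - f(Q_n) = -s\, f'(\eta_{n,s})(v)
\]
for some $\eta_{n,s}$ on the segment. Combining the convergence $f'(Q_n) \to L$ with the continuity of $f'$ available on the deleted neighborhood (this is the substantive regularity one needs, and it is present in the Fermat-point application where $f$ is smooth away from the vertices) should then give $f'(\eta_{n,s})(v) > L(v)/4$ uniformly for $n$ large and $s$ in a fixed range, which produces the inequality $f(Q_n) > f(P) + c$ for a fixed positive $c$, in contradiction with $f(Q_n) \to f(P)$.
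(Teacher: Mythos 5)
You correctly isolated the uniform lower bound on $s_n$ as the crux, but the patch you propose does not close the gap, and in fact no patch of this kind can: the lemma as stated is false, even if one adds your extra hypothesis that $f'$ is continuous --- indeed even $C^\infty$ --- on the deleted neighborhood. (The paper offers no proof to compare against; it leaves this lemma to the reader.) The failing step is the claim that $f'(Q_n)\to L$ together with continuity of $f'$ off $P$ yields $f'(\eta_{n,s})(v)>L(v)/4$ uniformly. Continuity of $f'$ on the deleted neighborhood gives no uniform control as the argument approaches $P$, and your mean value points $\eta_{n,s}$ do approach $P$: the segment $[Q_n,Q_n-sv]$ passes within distance comparable to $|Q_nP|$ of $P$ (and if $v$ is parallel to $l$, which is exactly the direction in which the hypothesis gives you information, the segment passes through $P$ itself, leaving the region of differentiability). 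The hypotheses control $f'$ only along the single sequence $(Q_n)$; in the gaps between consecutive $Q_n$ the gradient may oscillate without bound on its behavior.

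Concretely, take $P$ the origin and $f(x,y)=r^2\bigl(2+\sin(1/r)\bigr)$ where $r=\sqrt{x^2+y^2}$, with $f(0,0)=0$. This $f$ is continuous on $\mathbf{R}^2$, is $C^\infty$ on $\mathbf{R}^2\setminus\{P\}$, and has a strict global minimum at $P$ because $f\geq r^2$. Yet along the $x$-axis, at $t_n=1/((2n+1)\pi)$ one computes $\nabla f(t_n,0)=(4t_n+1,0)\to(1,0)\neq\mathbf{0}$, so every hypothesis of the lemma (plus all the regularity you asked for) holds while the conclusion fails. What the Fermat-point application actually supplies, and what a correct version of the lemma must assume, is stronger: the \emph{full} one-sided limit $\lim_{t\to0^+}f'(P+tu)=L$ along the entire ray, not merely along a sequence --- for $f=|x-P_1|+(\mbox{smooth})$ the gradient extends continuously to $P_1$ along each ray, since $(x-P_1)/|x-P_1|$ is constant there. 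Under that hypothesis your mean value idea, run along the ray itself with $h(t)=f(P+tu)$, becomes sound and simpler: $h(t)-h(0)=t\,f'(P+\xi_t u)(u)$ for some $\xi_t\in(0,t)$, so $h$ has one-sided derivative $L(u)$ at $0$, and $P$ is not a local extremum provided $L(u)\neq0$. Note the further point, glossed over in both the statement and your proposal, that $L\neq0$ alone is not enough --- $L$ could annihilate the direction $u$ of the line --- so the line must be chosen suitably; in the application one aims $l$ along $-w/|w|$, where $w$ is the limit of the sum of the two unit vectors toward $P_2,P_3$, making $L(u)=1-|w|<0$.
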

\begin{proof}
  The proof is left to the reader.
\end{proof}

First we prove the existence of the Fermat point of the triangle
$P_1P_2P_3$.Zuo Quanru and Lin Bo already used a
sophisticated version of this method in \cite{zuo}.

Let $P=(x_{P},y_{P})$,$P_1=(x_{P_{1}},y_{P_{1}}),P_2=(x_{P_{2}},y_{P_{2}}),P_3=(x_{P_{3}},y_{P_{3}})$.Let 
\begin{equation*}
  f(x,y)=|PP_1|+|PP_2|+|PP_3|=\sum_{i=1}^3\sqrt{(x_{P}-x_{P_{i}})^2+(y_{P}-y_{P_{i}})^2}.
\end{equation*}

\begin{theorem}[Existence of the Fermat point]\label{theorem:3}
  Any triangle $P_1P_2P_3$ has a Fermat point.
\end{theorem}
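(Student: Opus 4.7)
My plan is to reduce the problem, which lives on the non-compact space $\mathbf{R}^2$, to one on a closed ball, so that the extreme value theorem becomes applicable. The function $f$ is continuous on $\mathbf{R}^2$ as a sum of continuous functions, but $\mathbf{R}^2$ is unbounded, hence the extreme value theorem cannot be invoked directly.

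First I would establish coercivity of $f$, namely that $f(P)\to\infty$ as $|OP|\to\infty$, where $O$ is the origin. This follows from the triangle inequality: for each $i\in\{1,2,3\}$,
\begin{equation*}
  |PP_i|\geq |OP|-|OP_i|,
\end{equation*}
so
\begin{equation*}
  f(P)\geq 3|OP|-\bigl(|OP_1|+|OP_2|+|OP_3|\bigr),
\end{equation*}
and the right-hand side tends to $+\infty$ as $|OP|\to\infty$.

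Next I would fix the reference value $M:=f(P_1)$, which is a finite nonnegative real number. By coercivity, I can choose $R>0$ large enough so that $|OP_1|\leq R$ and so that $f(P)>M$ whenever $|OP|>R$. Let $D$ be the closed disk of radius $R$ centered at $O$. Then $D$ is a nonempty, bounded, closed subset of $\mathbf{R}^2$, and $f$ restricted to $D$ is continuous, so the extreme value theorem furnishes a point $\xi\in D$ with $f(\xi)\leq f(P)$ for every $P\in D$. In particular $f(\xi)\leq f(P_1)=M$.

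Finally I would argue that $\xi$ is a global minimum of $f$ on $\mathbf{R}^2$. For any $P\in\mathbf{R}^2\setminus D$ one has $f(P)>M\geq f(\xi)$, while for $P\in D$ the inequality $f(\xi)\leq f(P)$ comes directly from the choice of $\xi$. Hence $\xi$ minimizes $f$ over the entire plane, which is precisely the Fermat point. The only mildly delicate step is the coercivity estimate; everything else is a routine invocation of the extreme value theorem, so I expect no serious obstacle.
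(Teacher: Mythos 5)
Your proposal is correct and takes essentially the same approach as the paper: restrict $f$ to a large closed disk, apply the extreme value theorem, and observe that the minimizer on the disk is global. The only difference is cosmetic --- you center the disk at the origin and justify the key step with an explicit coercivity estimate $f(P)\geq 3|OP|-(|OP_1|+|OP_2|+|OP_3|)$, whereas the paper centers the disk at $P_1$ and simply asserts that for large radius the minimum on the disk is the global one; your version supplies the detail the paper leaves to the reader.
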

\begin{proof}
Draw a circle $O_1$ centering at $P_1$,whose radius $r$ is large
enough.Then $$D_{1}=\{|P-P_1|\leq r:P\in \mathbf{R}^2\}$$ is a bounded
closed disk.According to the extreme value theorem,$f$ must
attain a minimum on $D_1$.When $r$ is large,the minimum point of $f$ on
$D_1$ is the minimum point of $f$ on the whole plane $\mathbf{R}^2$.Thus the existence of the Fermat
point of the triangle $P_1P_2P_3$ is guaranteed.
\end{proof}

Now we prove the uniqueness of the Fermat point of the triangle
$P_1P_2P_3$,in the mean time,we find the exact
location of the Fermat point.Theorem \eqref{theorem:9}  and Theorem
\eqref{theorem:10} are our main theorems.

If $P_0=(x_{P_{0}},y_{P_{0}})$ is a minimum point of $f$,and $P_0\not\in \{P_1,P_2,P_3\}$,then
according to Fermat's theorem,we have
\begin{equation}\label{eq:1}
  \begin{cases}
          \displaystyle\frac{\partial f}{\partial x}(x_{P_{0}},y_{P_{0}})=\sum_{i=1}^3
  \frac{x_{P_{0}}-x_{P_{i}}}{\sqrt{(x_{P_{0}}-x_{P_{i}})^2+(y_{P_{0}}-y_{P_{i}})^2}}=0,\\
\displaystyle\frac{\partial f}{\partial
    y}(x_{P_{0}},y_{P_{0}})=\sum_{i=1}^3 \frac{y_{P_{0}}-y_{P_{i}}}{\sqrt{(x_{P_{0}}-x_{P_{i}})^2+(y_{P_{0}}-y_{P_{i}})^2}}=0.
  \end{cases}
\end{equation}
Let vectors
$$
\mathbf{L_{0}}=(x_{p_{0}}-x_{p_{1}},y_{p_{0}}-y_{p_{1}}),\mathbf{M_{0}}=(x_{p_{0}}-x_{p_{2}},y_{p_{0}}-y_{p_{2}}),\mathbf{N_{0}}=(x_{p_{0}}-x_{p_{3}},y_{p_{0}}-y_{p_{3}}).
$$
Then the simultaneous equations \eqref{eq:1} is equivalent to
\begin{equation}
  \label{eq:2}
\mathbf{\frac{L_{0}}{|L_{0}|}+\frac{M_{0}}{|M_{0}|}+\frac{N_{0}}{|N_{0}|}}=\mathbf{0}.
\end{equation}
When $P_0\in \{P_1,P_2,P_3\}$,equation \eqref{eq:2} is not
defined,because in this case,one of  $|\mathbf{L_{0}}|$,
$|\mathbf{M_{0}}|$,  $|\mathbf{N_0}|$ is $0$.

Notice that 
$\frac{\mathbf{L_0}}{|\mathbf{L_0}|},\frac{\mathbf{M_0}}{|\mathbf{M_0}|},\frac{\mathbf{N_0}}{|\mathbf{N_0}|}$
are unit vectors.According to lemma 5,it is easy to verify that equation
\eqref{eq:2} holds if and only if the point $P_0$ is in the interior of the
triangle region,and the radian measure of the angle between any
two of the unit vectors is $\frac{2\pi}{3}$.

If there is no point satisfying equation \eqref{eq:2},then there is no
minimum point of $f$ on $\mathbf{R}^2\backslash\{P_1,P_2,P_3\}$,which means that there is no Fermat point of the triangle except
points $P_1,P_2,P_3$.But according to the existence of the Fermat
point(Theorem \eqref{theorem:3}),we
know that the Fermat point of the triangle do exist,so the Fermat point of the triangle must be on the vertex of the triangle
$P_1P_2P_3$ whose corresponding interior angle is the
largest among the three interior angles.Combine the analysis in this
paragraph and in last paragraph with  lemma
\eqref{lemma:4},we have

\begin{theorem}\label{theorem:9}
When the radian measure of an interior angle of the triangle $P_1P_2P_3$
is equal or larger than $\frac{2\pi}{3}$,then the Fermat point must be on the vertex of
the triangle whose corresponding interior angle is the largest among
the three interior angles,and the Fermat point is unique.
\end{theorem}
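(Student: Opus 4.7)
The plan is to combine the paragraph-long analysis of equation \eqref{eq:2} that precedes this theorem with Lemma \ref{lemma:4} to exclude every interior critical point under the hypothesis, and then to pin down the Fermat point by directly comparing the values of $f$ at the three vertices.

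The first step is to argue by contradiction that no minimum of $f$ lies in $\mathbf{R}^2\setminus\{P_1,P_2,P_3\}$. If such a minimum $P_0$ existed, then Fermat's theorem would give equation \eqref{eq:2} at $P_0$, which, as already observed, forces $P_0$ to lie in the interior of the triangle region with $\mbox{rad}\angle P_iP_0P_j=\frac{2\pi}{3}$ for every pair $\{i,j\}$. Assume without loss of generality that $\mbox{rad}\angle P_2P_1P_3\ge \frac{2\pi}{3}$ is the largest interior angle. Applying Lemma \ref{lemma:4} with $M_1=P_2,\ M_2=P_1,\ M_3=P_3,\ M=P_0$ yields
\begin{equation*}
\mbox{rad}\angle P_2P_0P_3>\mbox{rad}\angle P_2P_1P_3\ge \frac{2\pi}{3},
\end{equation*}
contradicting $\mbox{rad}\angle P_2P_0P_3=\frac{2\pi}{3}$. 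Combined with Theorem \ref{theorem:3}, this forces the Fermat point to coincide with one of $P_1,P_2,P_3$.

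For the second step I would compute $f(P_i)=|P_iP_j|+|P_iP_k|$ for $\{i,j,k\}=\{1,2,3\}$ and compare. Because the three interior angles of a triangle sum to $\pi$, at most one of them can be $\ge \frac{2\pi}{3}$, so the largest angle is strictly larger than the other two. By the standard side-angle correspondence, the side opposite this vertex is strictly longer than either of the other two sides; hence, if $P_1$ is the vertex with the largest angle, $f(P_1)<f(P_2)$ and $f(P_1)<f(P_3)$. This simultaneously identifies the Fermat point and establishes its uniqueness.

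The main difficulty I anticipate is in the first step: one must apply Lemma \ref{lemma:4} to the correct triple of vertices so that the angle $\mbox{rad}\angle P_iP_0P_j$ that the lemma forces to exceed $\frac{2\pi}{3}$ is the same one that equation \eqref{eq:2} forces to equal $\frac{2\pi}{3}$. Once that matching is made the contradiction is immediate, and the vertex-comparison step is then routine.
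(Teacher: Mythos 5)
Your proposal is correct and follows the paper's own route: Fermat's theorem plus the analysis of equation \eqref{eq:2} combined with Lemma \ref{lemma:4} to rule out any minimum off the vertices, then existence (Theorem \ref{theorem:3}) to force the Fermat point onto a vertex. Your second step, comparing $f(P_1)<f(P_2)$ and $f(P_1)<f(P_3)$ via the side--angle correspondence, is not a different approach but a welcome completion: the paper merely asserts that the largest-angle vertex is the minimizer, while you actually verify it.
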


Next we prove
\begin{theorem}\label{theorem:10}
When the radian  measure of all the interior angles of the triangle
$P_1P_2P_3$ are less than $\frac{2\pi}{3}$,then the Fermat point  must
be  in the interior of the triangle and is unique,denoted by $P_0$.And
$\mbox{rad}\angle P_1P_{0}P_2=\mbox{rad}\angle P_2P_{0}P_3=\mbox{rad}\angle P_3P_{0}P_1=\frac{2\pi}{3}$.
\end{theorem}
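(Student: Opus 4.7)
The plan is to combine what has already been done with Lemma 7 (the fourth lemma) to rule out the possibility that the Fermat point lies at a vertex, and then invoke Lemma \ref{theorem:1.22} for uniqueness. By Theorem \ref{theorem:3} a Fermat point $P_{0}$ exists. The discussion preceding the theorem shows that if $P_{0}\notin\{P_{1},P_{2},P_{3}\}$, then Fermat's theorem combined with Lemma 5 forces $P_{0}$ to lie in the interior of the triangle and to satisfy the $\frac{2\pi}{3}$ angle condition. So the entire content of Theorem \ref{theorem:10} reduces to two tasks: (i) exclude each vertex $P_{i}$ as a possible Fermat point, and (ii) deduce uniqueness from Lemma \ref{theorem:1.22}.

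For task (i), which I expect to be the main obstacle, I would fix a vertex, say $P_{1}$, choose any line $l$ through $P_{1}$ with unit direction $\mathbf{u}$, and take $Q_{n}=P_{1}+t_{n}\mathbf{u}$ with $t_{n}\downarrow 0$. Since $f$ is differentiable on $\mathbf{R}^{2}\setminus\{P_{1},P_{2},P_{3}\}$, one computes
\begin{equation*}
\nabla f(Q_{n}) \;=\; \frac{Q_{n}-P_{1}}{|Q_{n}-P_{1}|}+\frac{Q_{n}-P_{2}}{|Q_{n}-P_{2}|}+\frac{Q_{n}-P_{3}}{|Q_{n}-P_{3}|}\;\longrightarrow\;\mathbf{u}+\mathbf{v},
\end{equation*}
where $\mathbf{v}=\frac{P_{1}-P_{2}}{|P_{1}-P_{2}|}+\frac{P_{1}-P_{3}}{|P_{1}-P_{3}|}$. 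The angle between the two unit summands of $\mathbf{v}$ equals the interior angle at $P_{1}$, which by hypothesis is strictly less than $\frac{2\pi}{3}$, so $|\mathbf{v}|=2\cos(\tfrac{1}{2}\angle P_{2}P_{1}P_{3})>2\cos\tfrac{\pi}{3}=1$. Hence $\mathbf{u}+\mathbf{v}$ can never be the zero vector, because a unit vector cannot cancel a vector of length exceeding $1$. So the hypothesis of Lemma 7 is satisfied, and $P_{1}$ is not a local minimum of $f$; the same argument applies at $P_{2}$ and $P_{3}$.

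Consequently $P_{0}\notin\{P_{1},P_{2},P_{3}\}$, so the paragraph preceding Theorem \ref{theorem:9} applies: $P_{0}$ lies in the interior of the triangle and
\begin{equation*}
\mbox{rad}\angle P_{1}P_{0}P_{2}=\mbox{rad}\angle P_{2}P_{0}P_{3}=\mbox{rad}\angle P_{3}P_{0}P_{1}=\tfrac{2\pi}{3}.
\end{equation*}
For task (ii), Lemma \ref{theorem:1.22} produces a \emph{unique} interior point $F'$ with this angle property, so $P_{0}=F'$, which gives uniqueness and completes the proof.
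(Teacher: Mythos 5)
Your proposal is correct and takes essentially the same route as the paper's own proof: existence from Theorem \ref{theorem:3}, the reduction via equation \eqref{eq:2} and Lemma \ref{theorem:1.22} to the unique interior point with the $\frac{2\pi}{3}$ angles, and an application of Lemma 7 to the limit of $f'(Q_n)$ along a line through each vertex to rule out $P_1,P_2,P_3$. Your explicit bound $|\mathbf{v}|=2\cos\left(\tfrac{1}{2}\angle P_2P_1P_3\right)>2\cos\tfrac{\pi}{3}=1$ merely spells out the step the paper states as $\lim_{n\to\infty}\left|\frac{\mathbf{M_n}}{|\mathbf{M_n}|}+\frac{\mathbf{N_n}}{|\mathbf{N_n}|}\right|>1$, and your restriction to one-sided sequences $t_n\downarrow 0$ is, if anything, slightly more careful than the paper's assertion that the limit exists for an arbitrary sequence on $l$.
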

\begin{proof}
According to lemma \eqref{theorem:1.22},there exists a unique point
$P_{0}$ in the interior of the triangle  satisfying the condition
$\mbox{rad}\angle P_1P_{0}P_2=\mbox{rad}\angle P_2P_{0}P_3=\mbox{rad}\angle
P_3P_{0}P_1=\frac{2\pi}{3}$.

And the  point $P_0$ is the unique point satisfying equation
\eqref{eq:2},which means that $P_0$ is the only possible minimum point of $f$ except
$P_1,P_2,P_3$.So if we managed to prove that  none of the points $P_1,P_2,P_3$ is
the minimum point of $f$,then we managed to prove that $P_0$ is the unique
Fermat point of $f$.Now we do this job.

Draw a line $l$ passing through the point $P_1$.Now we prove that for any sequence of 
points $Q_1=(x_1,y_1),Q_2=(x_2,y_2),\cdots,Q_n=(x_n,y_n),\cdots$ on
$l$ such that $\lim_{n\to\infty}|Q_nP_1|=0$($P_1$ is not in the sequence),we have
$\lim_{n\to\infty}f'(Q_n)$ exists and is a nonzero linear map from
$\mathbf{R}^2$ to $\mathbf{R}$,then according to lemma 7,we can prove
that $P_1$ is not
a minimum point of $f$.

Let
$\mathbf{L_{n}}=(x_{n}-x_{P_{1}},y_{n}-y_{P_{1}}),\mathbf{M_{n}}=(x_{n}-x_{P_{2}},y_{n}-y_{P_{2}}),\mathbf{N_{n}}=(x_{n}-x_{P_{3}},y_{n}-y_{P_{3}})$.Then
$$
\frac{\mathbf{L_{n}}}{\mathbf{|L_{n}|}}+\frac{\mathbf{M_{n}}}{|\mathbf{M_{n}}|}+\frac{\mathbf{N_{n}}}{\mathbf{|N_{n}|}}=(\frac{\partial f}{\partial x}(x_n,y_n),\frac{\partial f}{\partial y}(x_n,y_n)).
$$
And we have
$$
\lim_{n\to\infty}\left|\frac{\mathbf{M_{n}}}{|\mathbf{M_{n}}|}+\frac{\mathbf{N_{n}}}{|\mathbf{N_{n}}|}\right|>1,
$$
this is because $\mbox{rad}\angle P_2P_1P_3<\frac{2\pi}{3}$.So as $n$
goes to
infinity,$\frac{\mathbf{L_{n}}}{\mathbf{|L_{n}|}}+\frac{\mathbf{M_{n}}}{|\mathbf{M_{n}}|}+\frac{\mathbf{N_{n}}}{\mathbf{|N_{n}|}}$
tends to a nonzero vector.So as $n$ goes to infinity,$f'(Q_n)$ tends
to a nonzero linear map from $\mathbf{R}^2$ to $\mathbf{R}$.Done.

So $P_1$ is not a minimum point of $f$.Similarly,$P_2,P_3$ are not
minimum points of $f$.So $P_0$ is the unique minimum point of $f$.
\end{proof}

\end{document}